\newtheorem{thm}{Theorem}
\newtheorem{proposition}[thm]{Proposition}
\newtheorem{lemma}[thm]{Lemma}
\newtheorem{definition}[thm]{Definition}
\begin{document}

\title{On the Approximation Ratio of the \\ Random Chinese Postman Tour for Network Search }

\author[]{Thomas Lidbetter}
\affil[]{\emph{Department of Management Science and Information Systems, Rutgers Business School,} \\ {\em 1 Washington Park, Newark, NJ 07102, USA, tlidbetter@business.rutgers.edu}}

 \date{}
  \maketitle
  \thispagestyle{empty}
\maketitle

 \begin{abstract}
We consider a classic search problem first proposed by S. Gal in which a \emph{Searcher} randomizes between unit speed paths on a network, aiming to find a hidden point in minimal expected time in the worst case. This can be viewed as a zero-sum game between the Searcher and a time maximizing \emph{Hider}. It is a natural model for many search problems such as search and rescue operations; the search for an enemy, a bomb or weapons in a military context; or predator-prey search. A \emph{Chinese Postman Tour} (\emph{CPT}) is a minimal time tour of the network that searches all the arcs and a \emph{Random Chinese Postman Tour} (\emph{RCPT}) is an equiprobable choice of any given CPT and its reverse. The full class of networks for which a RCPT is optimal is known, but otherwise little is known about the solution of the game except in some special cases that have complicated optimal strategies that would be impractical to implement. The question of how well a RCPT or any other search strategy performs for general networks has never been analyzed. We show that a RCPT has an approximation ratio of $4/3$: that is, the maximum expected time it takes to find a point on the network is no greater than $4/3$ times that of the optimal search strategy. We then examine the performance of a RCPT in a related search game recently proposed by S. Alpern in which the Searcher must return to his starting point after finding the Hider. 

 \end{abstract}{\bf Keywords:} game theory; search games; Chinese Postman Tour; networks

\newpage
\setcounter{page}{1}

\section{Introduction}
\label{intro}

\cite{Gal79} proposed a search problem on a connected network consisting of nodes joined by arcs. A {\em Searcher}, beginning at a distinguished {\em root} node of the network moves around with unit speed with the aim of locating a target, or {\em Hider}, situated at an unknown point of the network. The problem for the Searcher is to find a randomized search plan, or {\em mixed strategy} that minimizes the expected time to locate the Hider in the worst case. This can be viewed as a zero-sum game between the time minimizing Searcher and the time maximizing Hider. The idea of this game was originally conceived by \cite{Isaacs}.

This is a natural model of many search problems in a military context, for example the problem of locating a terrorist, a bomb or a weapon cache hidden in a known environment. It is also relevant to search-and-rescue problems and to models of predator-prey interaction in the natural world. In security scenarios, randomization can be exploited to ensure that your actions are not predictable to your opponent. An example of this is the use of randomized security patrolling used in Los Angeles Airport (see \cite{Pita}). But ease of implementation is also an important factor to consider, especially in a military context. In this paper we strive to balance the desire for randomness and unpredictability with the desire for easily implemented search strategies.

In his original work, \cite{Gal79} gave a full solution of his game for tree networks (those with no cycles) and Eulerian networks (those containing an Eulerian tour, that is one that traverses each arc of the network exactly once). Gal found that for both these classes of networks it is optimal for the Searcher to identify a minimal time tour, called a {\em Chinese Postman Tour} (or {\em CPT}) of the network, and choose with equal probability the CPT or its reverse. This mixed strategy is known as a {\em Random Chinese Postman Tour} (or {\em RCPT}), and it is easy to see that the expected time taken for the Searcher using a RCPT to locate the Hider, wherever he is located on the network, will be no more than half the length $L$ of the CPT. This is because if a given CPT finds a point of a network at time $t$, the reverse of the CPT finds it at time no greater than $L-t$, so that the expected discovery time is at most $(1/2)t+(1/2)(L-t)=L/2$.

Since its original formulation the game described above has received considerable attention, for example in the work of \cite{Reijnierse-Potters}, \cite{Pavlovic} and \cite{Gal:2001}. In the latter work, Gal finally completely classified those networks for which it was optimal for the Searcher to employ a RCPT. The networks for which this is the case are known as {\em weakly Eulerian}, and are, roughly speaking, those consisting of a number of disjoint Eulerian cycles which, when each contracted to a single point leave a tree.

On networks that are not weakly Eulerian, little is known about the optimal search strategy in general. Even very simple networks can have very complicated optimal search strategies. For example, the solution of the game played on the ``3-arc network'', consisting of 2 nodes joined by 3 unit length arcs was discovered by \cite{Pavlovic93} not until several years after the model was first formulated. For this network, the optimal strategy for the Searcher finds the Hider in expected time $(4+\ln(2))/3 \approx 1.564$, and the optimal Searcher strategy is complicated, involving randomizing between a continuum of strategies. This would be impractical to implement, and so we may compare the performance of the optimal strategy of the Searcher to that of a RCPT, which is straightforward to implement. In order to do this we calculate the {\em approximation ratio} of a RCPT: that is, the ratio between the maximum expected time required by the RCPT to find a point on the network to that of an optimal strategy. Since the length of a CPT for the 3-arc network is $4$, a RCPT finds the Hider in expected time no more than $(1/2) \cdot 4 = 2$, so the RCPT has approximation ratio $2/1.564 \approx 1.28$ for this network. 

In this paper we analyze the effectiveness of the RCPT as a search strategy in the general case, giving a simple formula in Section \ref{sec:Gal} for calculating the approximation ratio of a RCPT and showing that it never exceeds $4/3$. It is well known (\cite{Edmonds-Johnson}) that the problem of finding a CPT is computable in polynomial time (cubic in the number of nodes of the network), and once it has been found, a RCPT tour is easily implemented in a practical search situation by means of a toss of a coin. This work shows that the RCPT is not only easily implementable but also reasonably efficient as a search strategy. Little attention has been paid in the literature to the problem of finding approximately optimal Searcher strategies in unsolved cases of this game, and for search games in general.

In Section \ref{sec:f&f} we go on to consider a natural variant on the original model in which the Searcher wishes to minimize not simply the time to find the Hider but the total time to find him and return to her starting point. The return speed may be different to the search speed. This model, known as {\em find-and-fetch search}, was introduced recently by \cite{Alperna}, and is a more appropriate model for search-and-rescue operations in which a casualty must be found and taken back to the hospital in least possible time. The model also pertains to foragers in the natural world who seek food to return to their lairs. This added detail complicates the solution to the model considerably, and in \cite{Alperna} the solution is given for only some classes of tree networks. Unlike in Gal's original model, the Searcher's optimal strategy in the find-and-fetch model is complicated and involves making a randomized decision at each node of the tree.  In this work we assess the performance of a RCPT in the find-and-fetch model for {\em all} tree networks, finding a lower bound for the value of the game by using a new technique we call ``pruning'' to produce a new tree on which the game is easier to analyze. We also study the game on Eulerian networks, again giving a simple formula for the approximation ratio of a RCPT.

This work lies in the general area of {\em search games}, on which there is an extensive literature.  For good summaries see \cite{AlpernGal}, \cite{Garnaev} and \cite{Search-th}. In addition to the find-and-fetch model of \cite{Alperna}, there have been a number of recent extensions to the original model of search games on a network proposed by \cite{Gal79}, for example work of \cite{DG} and \cite{Alpern-arb} on search games with an arbitrary starting point for the Searcher. Search games in which the Searcher must pay a {\em search cost} to inspect a node of the network are considered in \cite{BK:costs1} and \cite{BK:costs2}. The expanding search paradigm of \cite{AL:exp} models search problems in which the Searcher can move instantaneously back to any point he has already searched, and \cite{AL:small} consider a model in which the Searcher can move at either a slow (searching) speed or a fast (non-searching) speed. There has also been much recent interest in {\em patrolling games}, which are search games on a network where a Patroller wishes to intercept some terrorist attack, for example \cite{AMP}, \cite{BGA} and \cite{ZFZ}.

\section{Preliminaries}
\label{sec:prelim}
Let $Q$ be a network consisting of a finite connected set of arcs which intersect at nodes, with a distinguished root node $O$. We may think of a network as an edge weighted multigraph embedded in three-dimensional Euclidean space in such a way that the edges intersect only at nodes of the network. (Three dimensions are the minimum required in general for such an embedding to be possible.) The edge weights correspond to the linear Lebesgue measure $\mu$ of the arcs of the network, so that the measure (or {\em length}) of an arc $a$ is $\mu(a)$. We may consider any subset $A \subset Q$ of points of a network (which may not correspond to a subgraph of the original multigraph). If $A$ is measurable, we write $\mu(A)$ for its length, and we write the total length of $Q$ as $\mu(Q) = \mu$.

A \emph{search strategy} is a unit speed walk $S: \mathbb{R}^+ \rightarrow Q$ starting at $O$, so that $S(0)=O$. More precisely, for any $0 \le t_1 \le t_2$, we insist that $d(S(t_1),S(t_2)) \le t_2-t_1$, where $d$ is the distance function on pairs of points of $Q$ given by taking the length of the shortest path between them. Since we are thinking of $Q$ as being embedded in Euclidean space, arcs can be traversed in either direction, and the Searcher does not have to finish traversing an arc after she starts, but can turn around and backtrack at any point. A \emph{mixed search strategy}, usually denoted by a lower case letter $s$ is a probabilistic choice of strategies. For a given point $H$ on $Q$ and a given search strategy $S$, we denote the first time that $S$ reaches $H$ by $T(S,H)$, which we call the \emph{search time}. That is
\[
T(S,H)=\min\{t\ge 0: S(t) = H\}.
\]
Note that a hiding place $H$ can be anywhere on the network, including in the interior of an arc. The search time is known to be well defined (see \cite{AlpernGal}). We consider the problem of determining the mixed strategy that minimizes the expected time to find any point on $Q$ in the worst case. Writing the set of all mixed search strategies as $\mathcal{S}$, the problem is to determine 
\[
\inf_{s \in \mathcal{S}} \sup_{H \in Q} T(s,H),
\]
where $T(s,H)$ is the expected value of the search time of $H$ under $s$. This can be viewed as a zero-sum game $\Gamma = \Gamma(Q,O)$ between the Searcher, who chooses a search strategy and a malevolent Hider who picks a point on $Q$. The payoff, which the Searcher seeks to minimize and the Hider to maximize, is the search time. The game is known to have a value, $V=V(Q)$ (see \cite{AlpernGal}), and the players have optimal strategies, which in general are mixed (randomized). A mixed strategy for the Hider is a distribution over $Q$ and is usually denoted by a lower case letter $h$. For mixed strategies $s$ and $h$ of the Searcher and Hider, respectively, we write $T(s,h)$ for the expected value of the search time, which we called the {\em expected search time}.

\cite{Gal79} showed that if $Q$ is a tree or Eulerian, a RCPT is optimal for the Searcher, as explained in the Introduction, so that the value of the game is equal to $\bar{\mu}/2$, where $\bar{\mu}$ is the length of a CPT. In the case of trees, the value is $\bar{\mu}/2=\mu$ and in the case of Eulerian networks it is $\bar{\mu}/2=\mu/2$. The complete class of networks for which the RCPT is optimal was later shown by \cite{Gal:2001} to be the class of weakly Eulerian networks. We have already informally defined weakly Eulerian networks in the Introduction, but we give a formal definition here, which is easily seen to be equivalent. We use the notion of the {\em bridge-block tree} of a network, $Q$. This is obtained by considering an equivalence relation on the nodes of $Q$ which relates two nodes if there are two arc-disjoint paths connecting them. Note that every node is equivalent to itself, since the path containing only that node is arc-disjoint from itself. This partitions the nodes of $Q$ into equivalence classes, and for each equivalence class we may consider the sub-network containing all the nodes in that class and all arcs joining them. Each of these sub-networks is {\em 2-arc-connected}: that is, they cannot be disconnected without removing at least $2$ arcs. We call these subnetworks the {\em blocks} of $Q$. Sometimes $2$-arc-connected networks are referred to as \textit{bridgeless}, because they have no {\em bridges} (disconnecting arcs). The arcs of $Q$ that are incident to two nodes in different blocks of $Q$ are exactly the bridges of $Q$. The bridge-block tree of $Q$ is the tree that has a node for every block of $Q$ and an arc between two nodes exactly when there is a bridge between the associated block in $Q$.

\begin{definition}
	A network $Q$ is {\em weakly Eulerian} if all its blocks are Eulerian.
\end{definition}

We now state Gal's result formally.

\begin{thm} [Theorems 2 and 3 from \cite{Gal:2001}]
	\label{thm:weakly-Eul}
	The value of the game played on $Q$ is $\bar{\mu}/2$ and a RCPT is optimal if and only if $Q$ is weakly Eulerian.
\end{thm}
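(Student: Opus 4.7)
The plan is to handle the two implications of the iff separately, using as a common input the fact $V(Q)\le\bar{\mu}/2$ that holds for every $Q$ (from the RCPT argument in the Introduction, since the CPT and its reverse hit any point $H$ at times summing to at most $\bar{\mu}$).

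For the ``if'' direction, assume $Q$ is weakly Eulerian with blocks $B_1,\dots,B_k$ connected by bridges. Since the RCPT already yields $V(Q)\le\bar{\mu}/2$, it suffices to exhibit a Hider mixed strategy $h^{*}$ with $T(s,h^{*})\ge\bar{\mu}/2$ for every Searcher strategy $s$. I would build $h^{*}$ by induction on the number of nodes of the bridge-block tree, taking as base cases Gal's two solved instances: trees (where $h^{*}$ puts mass on the leaves in proportion to the lengths of the incident arcs) and Eulerian networks (where $h^{*}$ is uniform on arcs). For the inductive step, pick a leaf block $B$ of the bridge-block tree attached to the rest of $Q$ by a bridge $e=\{u,v\}$ with $v\in B$. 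Place conditional mass on $B$ uniformly along its arcs, assign the bridge its own contribution $\mu(e)$, and on the residual weakly Eulerian network $Q'=\bigl(Q\setminus(B\setminus\{v\})\bigr)\setminus e$ use the inductively defined Hider strategy rooted at $u$. The key lemma to verify is that, conditional on the Searcher ever entering $B$, the expected additional time to find the uniform point of $B$ is at least $\mu(B)/2$; this follows from the Eulerian structure of $B$ because inside $B$ the first entry time plus the hitting time of a uniform random arc telescopes exactly as in Gal's Eulerian case, and assembling these bounds over the bridge-block tree yields $T(s,h^{*})\ge\bar{\mu}/2$.

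For the ``only if'' direction, suppose some block $B$ of $Q$ is non-Eulerian. Since $B$ is $2$-arc-connected, it has at least two odd-degree nodes and consequently any CPT of $Q$ must re-traverse a proper non-empty subset of $B$'s arcs. The plan is to exhibit a mixed search strategy $s'$ with $\sup_{H} T(s',H)<\bar{\mu}/2$. The natural candidate is a perturbation of the RCPT that, with a small probability $\varepsilon$ each, swaps in two ``rerouted'' closed tours through $B$ using the doubled arcs in opposite orders. For any $H$ lying on an affected arc, the rerouting saves a strictly positive amount of time in expectation, while for $H$ outside the perturbed segment the hitting time changes by only an $O(\varepsilon)$ additive term. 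Choosing $\varepsilon$ small enough then produces a uniform strict decrease below $\bar{\mu}/2$, contradicting the RCPT-optimality.

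The main obstacle is this second direction: producing a strict and \emph{uniform} improvement over the RCPT across all of $Q$, not just inside the offending block. The delicate part is the bookkeeping of how rerouting through $B$ perturbs hitting times of points lying outside $B$ but reached after traversing $B$, and ensuring that the improvement for points on the shortcut is not swamped by losses elsewhere. The case of several non-Eulerian blocks interacting through shared bridges is also a potential complication; I would expect to isolate a single offending block via the bridge-block tree and argue locally, using that local improvements may be combined or treated one block at a time.
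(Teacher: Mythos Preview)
The paper does not prove this theorem at all: it is stated with attribution (``Theorems 2 and 3 from \cite{Gal:2001}'') and used as a black box. So there is no proof in the paper to compare your proposal against; you are attempting to reconstruct Gal's original argument, which lies outside the present paper.

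That said, a few remarks on your sketch. The ``if'' direction is broadly on the right track: building the Hider strategy block-by-block along the bridge-block tree, uniform inside each Eulerian block, is indeed how one obtains the matching lower bound $\mu_1+\mu_2/2=\bar\mu/2$ (and is morally what the paper exploits in Lemma~4 via the identification trick). Your inductive bookkeeping would need to be made precise, but there is no conceptual obstacle.

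The ``only if'' direction is where the real content lies, and your proposal is more of a hope than a plan. The difficulty you flag is genuine: you must produce a \emph{uniform} strict improvement over $\bar\mu/2$, and a small-$\varepsilon$ perturbation of the RCPT does not obviously achieve this. Points on arcs traversed exactly once by the CPT are hit by the RCPT in expected time \emph{exactly} $\bar\mu/2$, and there is a continuum of such points; any perturbation that helps some of them by order $\varepsilon$ may hurt others by the same order, so ``choosing $\varepsilon$ small enough'' does not by itself yield a strict supremum decrease. Gal's actual argument is considerably more delicate: one must exploit the structure of a $2$-arc-connected non-Eulerian block to build a genuinely different mixed search (not a perturbation of a single CPT) whose worst point is strictly below $\bar\mu/2$. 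Your sketch does not yet contain the idea that makes this work.
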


The only non-weakly Eulerian networks for which the solution of the game is known are those consisting of two nodes joined by an odd number of arcs of the same length (\cite{Pavlovic93}). The simplest example of a network of this type is the 3-arc network described in the Introduction. We saw that for this network, the expected search time of a RCPT is at most 1.28 times the value of the game. Since the value of the game is not known in general, we may use lower bounds on the value to estimate it. One lower bound is obtained from the observation that if the Hider hides uniformly in the network (so that the probability he is located in any subset $X$ of $Q$ is proportional to $\mu(X)$) then the expected search time is at least $\mu/2$. 

It is also easy to see that the length of a CPT is no more than $2\mu$, since by ``doubling'' every arc of the network we obtain an Eulerian network of length $2\mu$, and an Eulerian tour of this network maps onto a tour of the original network with the length $2\mu$. Combining these two estimates of the value $V$ yields Theorem 3.19 from \cite{AlpernGal}:
\begin{equation}
\mu/2 \le V \le \bar{\mu}/2 \le \mu. \label{eq:weak-est}
\end{equation}
We write the maximum expected time of a RCPT over all points on $Q$ as $T_{RCPT}$. We are interested in finding the approximation ratio of the RCPT, as described in the Introduction. In particular, we say the RCPT has approximation ratio $\alpha$ if $T_{RCPT} \le \alpha V$ for any network. By (\ref{eq:weak-est}), $T_{RCPT}$ satisfies
\[
T_{RCPT} \le \bar{\mu}/2 \le \mu \le 2V. 
\]
In other words, the RCPT has approximation ratio 2.

The question of whether the ratio of $2$ can be improved has never been considered, and in the next section we show that it can be improved to $4/3$.

\section{Gal's classic game}
\label{sec:Gal}

This section deals with the approximation ratio of the RCPT in Gal's classic search game. In the next section we consider the approximation ratio of the RCPT in the ``find-and-fetch'' model recently introduced by \cite{Alperna}.

We begin by stating our main theorem.

\begin{thm}
	\label{thm:main}
	The approximation ratio of the RCPT in Gal's classic game is $4/3$. That is, $T_{RCPT} \le (4/3)V$.
\end{thm}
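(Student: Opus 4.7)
The plan is to combine a structural upper bound on $\bar{\mu}$ with a refined lower bound on $V$, both obtained from the bridge-block decomposition of $Q$, and then verify a simple algebraic inequality.

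Let $B_1,\ldots,B_k$ be the blocks of $Q$ and $\mu_B$ the total length of its bridges. Since any CPT traverses each bridge exactly twice and its restriction to each block is a closed tour covering the arcs of that block, one has $\bar{\mu}=2\mu_B+\sum_i \bar{\mu}(B_i)$. Each $B_i$ is $2$-arc-connected, so invoking the combinatorial fact that the minimum-weight T-join of the odd-degree vertices in a $2$-arc-connected graph is at most one third of the total edge length yields $\bar{\mu}(B_i)\le\tfrac{4}{3}\mu(B_i)$. Summing,
\[
\bar{\mu} \;\le\; 2\mu_B + \tfrac{4}{3}(\mu-\mu_B) \;=\; \tfrac{4}{3}\mu + \tfrac{2}{3}\mu_B.
\]

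The main technical ingredient is the lower bound $V(Q)\ge(\mu+\mu_B)/2$, which I would prove by induction on the number of bridges of $Q$. The base case (no bridges) reduces to $V\ge\mu/2$, the elementary uniform-Hider bound from~(\ref{eq:weak-est}). For the inductive step, fix a bridge $b$ of length $\ell$ whose removal splits $Q$ into $Q_u\ni O$ and $Q_v$, with $v\in Q_v$ the endpoint of $b$ in $Q_v$. The key lemma is the decomposition inequality
\[
V(Q) \;\ge\; V(Q_u,O) + \ell + V(Q_v,v).
\]
Applying the inductive hypothesis to $Q_u$ and $Q_v$ and using $\mu(Q_u)+\mu(Q_v)+\ell=\mu$ and $\mu_B(Q_u)+\mu_B(Q_v)+\ell=\mu_B$ then gives $V(Q)\ge(\mu+\mu_B)/2$.

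Combining the two inequalities, and writing $r:=\mu_B/\mu\in[0,1]$,
\[
\frac{T_{RCPT}}{V} \;\le\; \frac{\bar{\mu}/2}{(\mu+\mu_B)/2} \;=\; \frac{\bar{\mu}}{\mu+\mu_B} \;\le\; \frac{\tfrac{4}{3}\mu+\tfrac{2}{3}\mu_B}{\mu+\mu_B} \;=\; \frac{4+2r}{3+3r}.
\]
The last expression has derivative $-6/(3+3r)^2<0$, so it is decreasing on $[0,1]$ and attains its maximum value $4/3$ at $r=0$, giving the theorem.

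The main obstacle is the decomposition inequality. To prove it, I would consider the Hider strategy that plays the optimal Hider $h_u$ for $Q_u$ with probability $\alpha$ and the optimal Hider $h_v$ for $Q_v$ with probability $1-\alpha$. The key observation is that for any Searcher strategy $S$ in $Q$, the restriction of $S$ to the times it spends in $Q_u$ (respectively $Q_v$) is itself a valid search strategy on that subnetwork, starting at its root; thus, her expected time against $h_u$ is at least $V(Q_u,O)$ plus any time wasted on the other side of the bridge, while her expected time against $h_v$ is at least $V(Q_v,v)$ plus the cost of at least one full bridge traversal of length $\ell$. Choosing $\alpha$ to balance these bookkeeping terms, and arguing that no interleaving of partial searches on the two sides of the bridge can avoid paying both subgame values plus the bridge cost, gives the claimed lower bound.
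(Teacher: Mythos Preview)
Your proposal reaches the same two inequalities the paper uses --- $\bar\mu\le 2\mu_B+\tfrac{4}{3}(\mu-\mu_B)$ and $V\ge(\mu+\mu_B)/2$ --- and the final algebra is equivalent. The upper bound on $\bar\mu$ is essentially identical to the paper's (both rest on the Bermond--Jackson--Jaeger $4/3$ bound for bridgeless graphs, which is the same thing as your $T$-join statement).

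The genuine difference is in how you obtain the lower bound $V\ge(\mu+\mu_B)/2$. The paper does this in one stroke: identify all the nodes within each block. The resulting network is weakly Eulerian with the same $\mu$ and $\mu_B$, its value is exactly $(\mu+\mu_B)/2$ by Gal's theorem, and the almost trivial observation that identifying nodes cannot increase the value (any search on $Q$ projects to a search on the quotient) finishes the job. Your route instead inducts on the number of bridges via the decomposition inequality $V(Q)\ge V(Q_u,O)+\ell+V(Q_v,v)$.

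That inequality is true, but your sketch of its proof is the weak point. Saying that the restriction of $S$ to $Q_u$ is a valid search from $O$ gives $E_{h_u}[T]\ge V(Q_u,O)$, and similarly $E_{h_v}[T]\ge \ell+V(Q_v,v)$; but the mixture $\alpha\cdot(\text{first})+(1-\alpha)\cdot(\text{second})$ is then at most $\max\{V(Q_u,O),\,\ell+V(Q_v,v)\}$, not their sum. To get the sum you must also lower-bound the time the Searcher ``wastes'' on the wrong side of the bridge, and this wasted time depends on $H$ through when $H$ is found --- so ``choosing $\alpha$ to balance the bookkeeping'' is not a one-line matter. A rigorous argument requires tracking the interleaving schedule and showing something like $A(S)+B(S)\ge$ (appropriate combination), or invoking a known cut-vertex additivity lemma for the value. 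This can be done, but it is real work that your write-up does not supply; the paper's identification trick sidesteps the issue entirely and is considerably cleaner.

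A minor point: your equality $\bar\mu=2\mu_B+\sum_i\bar\mu(B_i)$ is correct, but the justification you give (``the restriction to each block is a closed tour covering its arcs'') only yields $\ge$; for $\le$ you must also note that one can splice together CPTs of the blocks along the bridge-block tree.
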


To prove the theorem, we start by obtaining a better lower bound on the value of the game. We use an observation from \cite{Alpernb} which says that if two nodes are ``stuck together'', the value of the game cannot increase.

\begin{lemma}
	\label{lemma:iden}
	Let $u$ and $v$ be nodes of a network $Q$, and consider a new network $Q'$ obtained by identifying $u$ and $v$ (this can be thought of as adding an arc of length $0$ from $u$ to $v$). Then $V(Q') \le V(Q)$.
\end{lemma}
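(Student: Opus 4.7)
The plan is to exhibit an explicit reduction from Searcher strategies on $Q$ to Searcher strategies on $Q'$ that never increases the payoff, and then conclude using the minimax definition of the value.

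First I would set up the natural projection $\pi : Q \to Q'$ that identifies $u$ with $v$ and is the identity elsewhere. A key observation, essentially built into the construction of $Q'$, is that $d_{Q'}(\pi(x),\pi(y)) \le d_Q(x,y)$ for all $x,y \in Q$, because any path in $Q$ projects to a no-longer path in $Q'$, and additionally in $Q'$ one may freely switch between the images of $u$ and $v$ at zero cost. Consequently, if $S : \mathbb{R}^+ \to Q$ is any unit-speed walk from $O$, then $S' := \pi \circ S$ is a unit-speed walk on $Q'$ from $\pi(O)$, i.e.\ a valid search strategy for the game on $Q'$.

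Next I would compare search times. Let $s^*$ be an optimal mixed Searcher strategy on $Q$, with value $V(Q)$, and let $s'$ be the pushforward of $s^*$ under $\pi$. For any point $H' \in Q'$, the preimage $\pi^{-1}(H')$ is either a single point $H \in Q \setminus \{u,v\}$, in which case the search time of $H'$ under $S'$ equals the search time of $H$ under $S$; or it is the pair $\{u,v\}$, in which case $T(S',H') = \min\{T(S,u),T(S,v)\} \le T(S,u)$. Taking expectations over $s^*$, we obtain in the first case $T(s',H') = T(s^*,H) \le V(Q)$, and in the second case $T(s',H') \le T(s^*,u) \le V(Q)$. Hence $\sup_{H' \in Q'} T(s',H') \le V(Q)$, so $V(Q') \le V(Q)$ by definition of the value.

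I do not expect any serious obstacle here; the argument is essentially a soft, topological one. The only point that requires a little care is verifying that $\pi \circ S$ is a legitimate unit-speed walk on $Q'$ in the sense of the preliminaries, which relies precisely on the inequality $d_{Q'} \circ (\pi \times \pi) \le d_Q$. Once this is in hand, the two-case analysis of the search time of $H'$ is routine and yields the inequality in the lemma.
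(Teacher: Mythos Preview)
Your argument is correct and is essentially the same as the paper's, which dispatches the lemma in a single sentence: the Hider's strategy set is unchanged, and any Searcher strategy on $Q$ projects to one on $Q'$ with no greater search time. Your writeup simply makes explicit the projection $\pi$, the contraction property $d_{Q'}(\pi(x),\pi(y))\le d_Q(x,y)$, and the two-case comparison of search times that the paper leaves implicit.
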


More precisely, \textit{identifying} nodes $u$ and $v$ means forming a new network $Q'$ where all arcs between $u$ and $v$ are removed and $u$ and $v$ are replaced by a single new node $w$; each arc in $Q$ with one endpoint in $\{u,v\}$ and other endpoint $x \notin \{u, v\}$ is replaced with an arc of the same length and endpoints $w$ and $x$.

Lemma \ref{lemma:iden} is easily seen to hold since the strategy set of the Hider in $Q'$ remains unchanged and any strategy of the Searcher in $Q$ can also be mapped onto a strategy in $Q'$ with no greater search time. We may also identify a larger set of nodes by successively identifying pairs of nodes in the set, and it clearly follows from Lemma~\ref{lemma:iden} that this will also not increase the value of the game.

Let $\mu_1$ be the total length of the bridge-block tree of $Q$ (that is, the sum of the lengths of all bridges of $Q$) and let $\mu_2$ be the sum of the lengths of the blocks. 

\begin{lemma}
	The value $V(Q)$ of the game on a network $Q$ satisfies
	\begin{equation}
	V(Q) \ge \mu_1 + \mu_2/2,
	\label{value-lb}
	\end{equation}
	with equality if $Q$ is weakly Eulerian.
\end{lemma}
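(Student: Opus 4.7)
The equality assertion is immediate from Theorem~\ref{thm:weakly-Eul}: for a weakly Eulerian $Q$, a CPT can be built by performing a depth-first traversal of the bridge-block tree, traversing each bridge once in each direction and making a single Eulerian circuit inside each block. This gives $\bar{\mu} = 2\mu_1 + \mu_2$, so Theorem~\ref{thm:weakly-Eul} yields $V(Q) = \bar{\mu}/2 = \mu_1 + \mu_2/2$.

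For the inequality, my plan is to reduce to the weakly Eulerian case using Lemma~\ref{lemma:iden}, specifically the parenthetical reading that identifying two nodes can be thought of as adding a zero-length arc between them. Adding such an arc leaves the Hider's set of hiding locations essentially unchanged (a length-$0$ arc has no interior) while only enlarging the Searcher's movement options, so it can only decrease the value. The construction: in each block $B$ of $Q$ take its odd-degree nodes (an even number by the handshaking lemma), pair them up arbitrarily, and insert a zero-length arc between each pair; call the resulting network $Q''$. Every node of each block of $Q''$ now has even degree, so each block is Eulerian and $Q''$ is weakly Eulerian. Applying the already-established equality case to $Q''$ together with $V(Q) \geq V(Q'')$ (from iterating Lemma~\ref{lemma:iden} once per inserted arc) then yields
\[
V(Q) \geq V(Q'') = \mu_1(Q'') + \mu_2(Q'')/2 = \mu_1 + \mu_2/2,
\]
where the last equality uses that the inserted arcs contribute zero length.

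The main thing to pin down — and the only step I expect to require more than bookkeeping — is that inserting these zero-length arcs does not disturb the bridge-block decomposition, i.e.\ that $\mu_1(Q'') = \mu_1$ and $\mu_2(Q'') = \mu_2$. The key observation is that every inserted arc has both endpoints inside a common block of $Q$, so it creates no new arc-disjoint paths between nodes lying in different blocks of $Q$. Consequently every original bridge is still a bridge in $Q''$, no two distinct blocks of $Q$ coalesce, and each block of $Q''$ is just the corresponding block of $Q$ with some zero-length arcs thrown in. Once this structural point is established, the $\mu$-counts are unchanged and the chain of equalities above closes the argument.
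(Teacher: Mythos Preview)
Your proof is correct and follows essentially the same strategy as the paper: use Lemma~\ref{lemma:iden} to pass to a weakly Eulerian network with the same $\mu_1$ and $\mu_2$, then invoke the equality case. The only difference is cosmetic: the paper collapses \emph{all} nodes of each block to a single point (yielding a bouquet of loops, which is trivially Eulerian), whereas you insert zero-length arcs to fix the parity of the odd-degree nodes; the paper's construction makes the preservation of the bridge-block decomposition immediate, while yours requires the short structural check you supply, but both arrive at the same inequality.
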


\begin{proof}
We first define a new network $Q_{WE}$ with $V(Q_{WE}) \le V(Q)$. For every block $X$ of $Q$, identify all the nodes of $X$, to obtain an Eulerian network of the same measure. By Lemma \ref{lemma:iden}, the value $V(Q_{WE})$ of the game on $Q_{WE}$ is no greater than $V(Q)$. Also, $Q_{WE}$ is clearly weakly Eulerian, since all its blocks are Eulerian. The length of a CPT on $Q_{WE}$ is $2\mu_1+\mu_2$, so by Theorem \ref{thm:weakly-Eul},
\[
V(Q) \ge V(Q_{WE}) = (2\mu_1+\mu_2)/2 = \mu_1+\mu_2/2.
\]
If $Q$ is already weakly Eulerian, we clearly have equality in (\ref{value-lb}) since by Theorem \ref{thm:weakly-Eul}, the value $V(Q)$ is half the length of the CPT, which is $2\mu_1+\mu_2$. 
\end{proof}

We will also need a combinatorial result, taken from Proposition 5 of \cite{BJJ}, which we state below.

\begin{proposition}[Proposition 5 of \cite{BJJ}] \label{prop:4/3}
	Every connected bridgeless graph $G$ has a [Chinese] postman tour of length at most $\frac{4}{3} |E(G)|$.
\end{proposition}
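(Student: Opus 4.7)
The plan is to reformulate the problem as a $T$-join minimization and then exhibit a simple fractional solution to the Edmonds--Johnson $T$-join LP.

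First I would note that every postman tour of $G$ arises by choosing a subset $J\subseteq E(G)$ of edges to repeat, producing a multigraph in which all vertex degrees are even, and then taking an Eulerian circuit; the length of the tour is $|E(G)|+|J|$. The resulting multigraph is Eulerian precisely when $J$ is a $T$-join, where $T$ is the set of odd-degree vertices of $G$. Hence the proposition is equivalent to showing that every connected bridgeless graph admits a $T$-join of size at most $\tfrac{1}{3}|E(G)|$.

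Next I would establish the key parity fact that in a bridgeless graph every $T$-cut has at least three edges. Let $\delta(U)$ be any edge cut with $\emptyset\subsetneq U\subsetneq V(G)$. Because $G$ is bridgeless, $|\delta(U)|\neq 1$, and $|\delta(U)|\neq 0$ by connectedness. If $|\delta(U)|=2$, the handshake identity $\sum_{v\in U}\deg_G(v)=2|E(G[U])|+|\delta(U)|$ forces $\sum_{v\in U}\deg_G(v)$ to be even, so $|T\cap U|$ is even, contradicting $\delta(U)$ being a $T$-cut. Therefore every $T$-cut has size at least $3$.

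Finally I would invoke the Edmonds--Johnson theorem: with nonnegative weights, the minimum-weight $T$-join equals the optimum of the linear program $\min\{\sum_e x_e : x(\delta(U))\ge 1 \text{ for every $T$-cut } \delta(U),\ x\ge 0\}$. The uniform assignment $x_e=1/3$ is feasible since each $T$-cut has at least three edges, and has objective value $|E(G)|/3$. Therefore $G$ has an integer $T$-join $J$ of size at most $|E(G)|/3$, yielding a postman tour of length at most $|E(G)|+|E(G)|/3=\tfrac{4}{3}|E(G)|$. The only nontrivial ingredient is the Edmonds--Johnson integrality of the $T$-join polyhedron for nonnegative weights, which I would take as a black box; the rest is an elementary parity check combined with the uniform $x_e=1/3$ fractional solution, so I would not foresee any serious obstacle once the reformulation is set up.
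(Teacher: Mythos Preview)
Your argument is correct. The reformulation as a $T$-join problem is standard, the parity check showing every $T$-cut has at least three edges is right, and the uniform $x_e=1/3$ fractional solution together with the Edmonds--Johnson integrality of the $T$-join dominant indeed yields a $T$-join of size at most $|E(G)|/3$.

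This is, however, a genuinely different route from the one sketched in the paper (following \cite{BJJ}). There the argument is combinatorial: one proves a lemma that every bridgeless graph contains an even subgraph $C$ (a disjoint union of cycles) with $|C|\ge \tfrac{2}{3}|E(G)|$, and then doubles the remaining edges $E(G)\setminus C$. Note that the complement of an even subgraph is automatically a $T$-join, so the BJJ lemma is in fact an explicit construction of the $T$-join whose existence you deduce non-constructively from LP integrality. Your approach is shorter and more conceptual, but it imports a heavy polyhedral theorem as a black box; the BJJ route is more self-contained and actually exhibits the tour, at the cost of proving the $2/3$ cycle-cover lemma. Either proof suffices for the use made of the proposition here.
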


Note that in \cite{BJJ}, a graph is defined in the usual way as a set $V(G)$ of vertices and a set $E(G)$ of edges, which consist of (unordered) pairs of vertices. This differs from our definition of networks, as given in Section~\ref{sec:prelim}, but Proposition~\ref{prop:4/3} easily extends to networks, as we show in Lemma~\ref{lemma:bridgeless}.

We first give a brief idea of how \cite{BJJ} prove Proposition~\ref{prop:4/3}. The result follows from a lemma proved in the same paper which says that for any bridgeless graph, it is possible to find a subset $C$ of edges that can be partitioned into cycles such that $|C| \ge \frac 2 3 |E(G)|$. By ``doubling'' the remaining edges in $E(G)-C$, we obtain an Eulerian graph with no more than $\frac 4 3 |E(G)|$ edges, and an Eulerian cycle on this new graph corresponds to a tour of length at most $\frac 4 3 |E(G)|$ in $G$.

\begin{lemma}
	\label{lemma:bridgeless}
	Suppose the network $Q$ is 2-arc-connected. Then the length $\bar{\mu}$ of a CPT of $Q$ satisfies
	\[
	\bar{\mu} \le (4/3)\mu.
	\]
	If $Q$ is any network, $\bar{\mu}$ satisfies
	\begin{equation}
	\bar{\mu} \le 2\mu_1 + (4/3)\mu_2.
	\label{CPT-ub}
	\end{equation}
\end{lemma}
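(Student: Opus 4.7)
The plan is to bootstrap Proposition~\ref{prop:4/3} from unweighted multigraphs to weighted networks via a subdivision argument, and then handle a general network by taking an Eulerian combination of block CPTs with every bridge doubled.

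For the 2-arc-connected statement, I would first scale all arc lengths to positive integers, which leaves the ratio $\bar{\mu}/\mu$ unchanged. From $Q$ I would construct a multigraph $G$ by subdividing each arc of length $\ell$ into $\ell$ unit-length edges, inserting degree-$2$ nodes between consecutive edges. Subdivision preserves $2$-edge-connectivity (any single edge whose removal disconnects $G$ would sit inside an arc of $Q$ whose removal disconnects $Q$, contradicting the hypothesis on $Q$), and $|E(G)|=\mu(Q)$, so Proposition~\ref{prop:4/3} furnishes a postman tour of $G$ of length at most $(4/3)\mu(Q)$. Since every subdivision node has degree $2$, any closed walk entering a subdivided arc is forced to traverse it end-to-end, and so the tour in $G$ projects cleanly to a postman tour of $Q$ of the same length. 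To lift the inequality $\bar\mu \le (4/3)\mu$ from rational to arbitrary real arc lengths, I would approximate $Q$ by a sequence of networks with the same combinatorial structure and rational arc lengths converging to those of $Q$, and pass to the limit.

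For the bound (\ref{CPT-ub}), let $B_1,\dots,B_k$ be the blocks of $Q$, and fix a CPT $W_i$ of $B_i$ of length $\bar\mu(B_i)\le (4/3)\mu(B_i)$ (using the 2-arc-connected case). Let $M$ be the multiset of arcs obtained by taking the union of the arcs traversed by the $W_i$ (with multiplicity) together with every bridge of $Q$ counted with multiplicity $2$. Each $W_i$ contributes even degree at every node of $B_i$, and each doubled bridge contributes $2$ at each of its endpoints, so every node of $M$ has even degree; moreover, $M$ is connected because the $W_i$ connect their respective blocks and the bridges join different blocks. Hence $M$ admits an Eulerian circuit based at $O$, which traces out a closed walk in $Q$ that traverses every arc and has total length $\sum_i \bar\mu(B_i) + 2\mu_1 \le (4/3)\mu_2 + 2\mu_1$, yielding (\ref{CPT-ub}).

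The delicate step I anticipate is the rational-to-real passage in the 2-arc-connected case, which requires that $\bar\mu$ depend continuously on the arc lengths for a fixed combinatorial network. This follows because computing $\bar\mu$ reduces to selecting a subset of arcs to duplicate so that the resulting multigraph is Eulerian (each arc traversed at most twice), giving only finitely many combinatorial candidates; the minimum of the finitely many linear-in-$\ell$ tour lengths is piecewise linear and hence continuous. The other ingredients — subdivision preserving $2$-edge-connectivity, projection of tours from $G$ back to $Q$, and the Eulerian-combination accounting — should be routine.
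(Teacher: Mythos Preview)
Your approach matches the paper's: subdivide arcs so that Proposition~\ref{prop:4/3} applies, handle irrational lengths by approximation, and for the general bound assemble block CPTs together with doubled bridges. The paper's own proof is a two-sentence sketch of exactly this; your version simply fills in the details (in particular, the continuity argument for $\bar\mu$ is a nice way to make the ``arbitrarily close in length'' step precise).

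One small correction: your justification for the projection step is wrong as stated. It is \emph{not} true that ``any closed walk entering a subdivided arc is forced to traverse it end-to-end'' merely because the subdivision nodes have degree~$2$; a closed walk may go $v_0\to v_1\to v_0$ freely. What you actually need is much simpler: a postman tour of $G$ covers every edge of $G$, hence every point of the corresponding arc in $Q$, and its length equals the length of the projected walk in $Q$. That projected walk is therefore a closed walk in $Q$ covering every point, so $\bar\mu(Q)$ is at most its length. (Alternatively, one can argue via $T$-joins: in an optimal postman tour the duplicated edges form a $T$-join for the odd-degree vertices, and since all subdivision vertices have even degree, the duplicated edges within any subdivided arc are either all or none --- so whole arcs are duplicated.) With this fix the argument goes through.
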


\begin{proof}
The first statement follows from the analagous result of \cite{BJJ} for graphs, since additional nodes can be added to $Q$ to transform it into a network whose arcs are arbitrarily close in length.

The second statement is easily seen to be true since any CPT of $Q$ must traverse all the bridges twice. 
\end{proof}

Theorem \ref{thm:main} then follows, since the ratio of the expected search time $T_{RCPT}$ of a RCPT and the value $V$ of the game satisfies
\begin{align*}
\frac{T_{RCPT}}{V} &\le \frac{\bar{\mu}/2}{V}\\
& \le \frac{\mu_1 + (2/3)\mu_2}{\mu_1+ \mu_2/2} \mbox{ (by (\ref{value-lb}) and (\ref{CPT-ub})})\\
& \le 4/3.
\end{align*}

\section{The Find-and-fetch game}
\label{sec:f&f}

The find-and-fetch model, introduced recently by \cite{Alperna}, differs from Gal's classic model in one way: the Searcher must return to the root of the network at return speed $\rho$ after finding the Hider. More precisely, the payoff of the game for a Hider strategy $H$ and Searcher strategy $S$ is not $T(S,H)$ but $R(S,H):=T(S,H)+d(H)/\rho$, where $d(H)$ is the length of the shortest path from $H$ to $O$. As $\rho \rightarrow \infty$, the payoff approaches $T(S,H)$, as in Gal's game. In this section we use the same notation $V(Q)$ to denote the value of the find-and-fetch game played on a network $Q$ with root $O$.

We may obtain a crude estimate on the approximation ratio of the RCPT in the find-and-fetch game using the observations that $T(RCPT,H) \le \mu$ and $d(H) \le \mu$ for any Hider strategy $H$, so that $R(RCPT,H) \le \mu(1+1/\rho)$. Since the expected search time of any Searcher strategy against the uniform Hider strategy is at least $\mu/2$, we must have $V \ge \mu/2$. Hence, writing $R_{RCPT}$ for the maximum expected value over all points $H$ in $Q$ of $T_{RCPT}+d(H)/\rho$, we have 
\[
\frac{R_{RCPT}}{V} \le \frac{\mu(1+1/\rho)}{\mu/2} = 2(1+1/\rho),
\]
showing that a RCPT is a $2(1+1/\rho)$-approximation for return speed equal to $\rho$. Note that if $\rho$ is at least $1$, then the RCPT is a $4$-approximation.

We will analyze the efficiency of the RCPT for the find-and-fetch game on trees and Eulerian networks, finding better bounds in these cases.

\subsection{Tree Networks}

We will first state the theorem from \cite{Alperna} which gives the value of the game for a certain class of trees. In order to do this, we must define a distribution $e$ for the Hider, known as the {\em Equal Branch Density} (or {\em EBD}) distribution, on the leaf nodes, $\mathcal{L}$ of a tree $Q$. We say a point or arc $x$ in $Q$ is {\em above} another point or arc $y$ if the unique path from $x$ to $O$ contains $y$. If $u$ is a node of $Q$ with at least two incident arcs above it, we call $u$ a {\em branch node}, and we call each of those incident arcs {\em branch arcs}. For a branch node $u$, a {\em branch} at $u$ is a subtree rooted at $u$ consisting of a branch arc $a$ and all other points above $a$, and is denoted $Q_a$. We write $Q_u$ for the union of all the branches at $u$.

\begin{definition} The EBD distribution $e$ on $Q$ is the unique distribution on $\mathcal{L}$ such that for any branch node $u$ with branch $Q_a$ at $u$, the probability $e(Q_a)$ that the Hider is located on $Q_a$ is proportional to the length $\mu(Q_a)$ of $Q_a$. That is, $e(A)=\mu(Q_a)/\mu(Q_u)$.
	
	We also define $D=D(Q)$ to be the average distance of the leaf nodes from $O$, weighted with respect to the EBD distribution, so that
	\[
	D=\sum_{u \in \mathcal{L}} e(u) d(u).
	\]
\end{definition}

We summarize the relevant results from \cite{Alperna}.

\begin{thm}
	\label{thm:f&f}
	The value $V(Q)$ of the find-and-fetch game on a rooted network $Q$ satisfies
	\[
	V(Q) \ge \mu + D/\rho.
	\]
	The inequality above holds with equality if $\rho \ge \tilde{\rho}$, where $\tilde{\rho}$ is a constant which measures the irregularity of the tree.
\end{thm}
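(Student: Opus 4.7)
My plan for the lower bound $V(Q) \geq \mu + D/\rho$ is to use the EBD distribution $e$ as the Hider's strategy. Against any pure Searcher strategy $S$ the payoff decomposes as
\[
R(S,e) = \mathbb{E}_{H \sim e}[T(S,H)] + \frac{1}{\rho}\mathbb{E}_{H \sim e}[d(H)] = \mathbb{E}_{H \sim e}[T(S,H)] + \frac{D}{\rho},
\]
so it suffices to establish $\mathbb{E}_{H \sim e}[T(S,H)] \geq \mu$ for every $S$ that visits each leaf (we may restrict to such $S$, since otherwise $R(S,e) = \infty$). This is the tree case of Gal's classical result, which I would prove here by induction on the number of branch nodes of $Q$. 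At the first branch node $u$ reached by $S$, label its branches $Q_{a_1}, \dots, Q_{a_k}$ in the order that $S$ enters them; writing $T_j(\ell)$ for the search time inside the $j$th branch once it is entered, the expected search time equals
\[
\sum_{j=1}^k 2\Big(\sum_{i<j}\mu(Q_{a_{i}})\Big)\, e(Q_{a_{j}}) \;+\; \sum_{j=1}^k \sum_{\ell \in Q_{a_{j}}} e(\ell)\,T_j(\ell),
\]
and combining $e(Q_{a_j}) = \mu(Q_{a_j})/\mu(Q_u)$ with the inductive hypothesis applied inside each branch collapses this to $\mu$.

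For the equality claim when $\rho \geq \tilde\rho$, I would construct an explicit mixed Searcher strategy of ``randomized depth-first search'' type: the Searcher walks from $O$ to the first branch node, and at each branch node $u$ she selects the next branch to explore with some probability $p(a)$, fully exploring that branch before returning to $u$ to pick another. Writing the expected payoff against a Hider at a point $H$ in branch $Q_a$ at $u$ as a linear function of the $p(a)$'s at the branch nodes on the path from $O$ to $H$, I would solve the linear system that equalizes this expression to $\mu + D/\rho$ across all $H \in Q$. The solution lies in the probability simplex exactly when $\rho$ exceeds a threshold $\tilde\rho$ that is determined by the discrepancy of branch lengths; as a simple illustration, on the star with two arcs of lengths $a_1, a_2$ one finds that the equalizing strategy is feasible precisely when $\rho \geq |a_1 - a_2|/(a_1 + a_2)$, and the general $\tilde\rho$ is the natural recursive extension of this quantity through the tree.

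The main obstacle is the equality part: extracting $\tilde\rho$ from the tree structure and verifying that the equalizing strategy actually achieves payoff $\mu + D/\rho$ against \emph{every} point of $Q$ and not only at the leaves. When $H$ lies in the interior of an arc, the payoff depends on which direction the Searcher happens to traverse that arc during her randomized DFS, and the equalizing property must hold simultaneously at both endpoints of every arc; reconciling these local constraints with the global probability constraints $\sum_a p(a) = 1$ at each branch node is precisely where the threshold $\rho \geq \tilde\rho$ is forced.
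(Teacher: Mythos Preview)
The paper does not give its own proof of this theorem: it is quoted verbatim as a summary of results from \cite{Alperna}, and immediately afterward the reader is referred to that paper for the definition of $\tilde\rho$. So there is no in-paper argument to compare against.

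That said, your plan is exactly the approach of \cite{Alperna}: the lower bound comes from the Hider playing the EBD distribution, and the matching upper bound for $\rho\ge\tilde\rho$ comes from a randomized depth-first Searcher strategy whose branch-selection probabilities are tuned to equalize the payoff across leaves. Two points in your lower-bound sketch need tightening. First, the displayed decomposition
\[
\sum_{j=1}^k 2\Big(\sum_{i<j}\mu(Q_{a_{i}})\Big)\, e(Q_{a_{j}}) \;+\; \sum_{j=1}^k \sum_{\ell \in Q_{a_{j}}} e(\ell)\,T_j(\ell)
\]
silently assumes $S$ is depth-first: it credits exactly $2\mu(Q_{a_i})$ of elapsed time to each earlier branch before entering branch $j$. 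A general $S$ may enter and leave a branch several times, so either invoke the classical fact that some depth-first search is a best response to any leaf distribution on a tree, or rewrite the induction to accommodate multiple entries. Second, the formula omits the additive $d(u)$ for the walk from $O$ to the first branch node $u$; without it your induction collapses to $\mu(Q_u)$ rather than $\mu$. Your description of the equality case and the role of $\tilde\rho$ is accurate in outline; the obstacle you flag (feasibility of the equalizing probabilities) is precisely what determines $\tilde\rho$ in \cite{Alperna}.
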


We refer the reader to \cite{Alperna} for a precise defininition of $\tilde{\rho}$, since it is not relevant to our analysis.

Since the expected time for the RCPT to reach any leaf node is exactly $\mu$, we have
\begin{equation}
R_{RCPT} = \mu + d_{\max}/\rho,
\label{eq:R-RCPT}
\end{equation}
where $d_{\max}$ is the distance of some farthest point in $Q$ from $O$.

In order to measure the efficiency of a RCPT we estimate how much larger $\mu + d_{\max}/\rho$ is than $\mu + D/\rho$, and we do this by obtaining a new tree $\tilde{Q}$ from $Q$ with $D(\tilde{Q}) \le D(Q)$ by means of a process we call {\em pruning}. If $u$ is a branch node of $Q$ and $a$ is a branch arc incident to $u$, let $Q[a]$ be the tree obtained by removing $Q_a$ and reattaching it at $O$. We call $Q[a]$ the tree obtained by {\em pruning} the branch $Q_a$ at $u$. Note that $\mu(Q) = \mu(\tilde{Q})$. Figure \ref{fig:pruning} depicts a network $Q$ with branch node $u$ and branch arc $a$, along with the tree $Q[a]$ obtained by pruning $Q_a$.

\begin{figure}[ht]
	\begin{center}
		\includegraphics[scale=0.5]{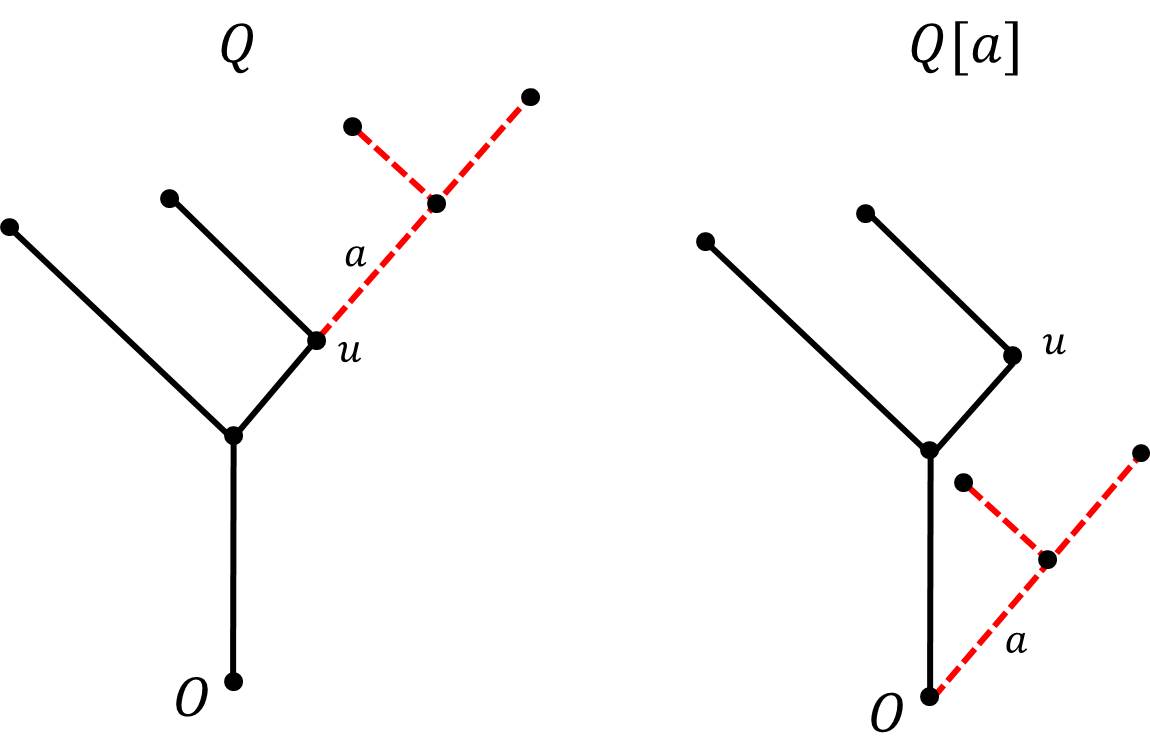}
		\caption{A tree $Q$ and the tree $Q[a]$ obtained by pruning $Q_a$.}
		\label{fig:pruning}
	\end{center}
\end{figure}

\begin{lemma}
	\label{lemma:pruning}
	Let $Q$ be a rooted network with branch node $u \neq O$ and branch arc $a$ incident to $u$, and suppose that there are no other branch nodes on the path between $O$ and $a$. Then $D(Q[a]) \le D(Q)$.
\end{lemma}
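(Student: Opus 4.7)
The approach is to express the difference $D(Q) - D(Q[a])$ as a sum over arcs via the identity $D = \sum_{a'} \mu(a')\, f(a')$, where $f(a')$ is the probability under EBD that the randomly chosen leaf lies above arc $a'$. This identity follows from $d(O,v) = \sum_{a' \text{ on path}} \mu(a')$ together with linearity of expectation. Since the arcs of $Q[a]$ are the same arcs as in $Q$ (only reattached), the difference reduces to tracking only how $f$ changes on each arc.

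I would introduce notation as follows: let $\ell = d(O,u)$; let the branches of $Q$ at $u$ be $T_a := Q_a$ together with $T_1, \dots, T_k$ (with $k \ge 1$) of measures $\mu_a, \mu_1, \dots, \mu_k$; set $M = \sum_i \mu_i$ and $\mu_O = \mu(Q_O)$; and let $d_a, d_i$ denote the EBD-weighted average distances from $u$ to a leaf within $T_a, T_i$ respectively. The hypothesis that the path from $O$ to $u$ has no interior branch nodes makes it a single chain of total length $\ell$, and the branch of $Q$ at $O$ containing $u$ has total measure $\ell + \mu_a + M$. By direct expansion of the EBD splitting rules I would verify: arcs not lying in the branch of $Q$ at $O$ containing $u$ satisfy $f_Q(a') = f_{Q[a]}(a')$; a chain arc between $O$ and $u$ satisfies $f_Q(a') - f_{Q[a]}(a') = \mu_a/\mu_O$; an arc $a' \in T_a$ satisfies $f_Q(a') - f_{Q[a]}(a') = \mu_a\ell\, g(a')/\bigl(\mu_O(\mu_a+M)\bigr)$; and an arc $a' \in T_i$ satisfies $f_Q(a') - f_{Q[a]}(a') = -\mu_i\mu_a\ell\, g(a')/\bigl(\mu_O M(\mu_a+M)\bigr)$, where $g(a')$ denotes the conditional EBD probability inside the relevant subtree. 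Summing $\mu(a')$ times these differences, and using $d_a = \sum_{a' \in T_a}\mu(a')\,g(a')$ and the analogous identity for each $T_i$, one obtains
\[
D(Q) - D(Q[a]) = \frac{\mu_a \ell}{\mu_O M(\mu_a + M)}\Bigl(M\mu_a + M^2 + Md_a - \Sigma\Bigr), \qquad \Sigma := \sum_i \mu_i d_i.
\]

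Non-negativity of the right-hand side then reduces to verifying $\Sigma \le M^2$, which is immediate from the elementary estimate $d_i \le \mu_i$ (the EBD-weighted distance from the root of a subtree to a leaf is bounded by the subtree's total length) combined with $\sum_i \mu_i^2 \le (\sum_i \mu_i)^2 = M^2$. The main obstacle I anticipate is organizational rather than mathematical: the formula for $f_{Q[a]}(a')$ on arcs of $T_i$ must uniformly handle the subcase $k = 1$, in which $u$ ceases to be a branch node of $Q[a]$, and the subcase $k \ge 2$, in which $u$ remains a branch node with a different splitting ratio. Both cases are captured by writing the splitting factor at $u$ in $Q[a]$ as $\mu_i/M$, which equals $1$ when $k = 1$, so no case distinction survives in the final formula.
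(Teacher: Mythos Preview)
Your proof is correct. The arc-based identity $D=\sum_{a'}\mu(a')f(a')$ is valid, your four case computations for $f_Q(a')-f_{Q[a]}(a')$ check out, and the resulting formula
\[
D(Q)-D(Q[a])=\frac{\mu_a\ell}{\mu_O M(\mu_a+M)}\bigl(M\mu_a+M^2+Md_a-\Sigma\bigr)
\]
is exactly what the paper obtains once one translates notation (the paper's $\mu(Y),D_Y,d(u),e(Q_u)$ correspond to your $M,\Sigma/M,\ell,(\ell+\mu_a+M)/\mu_O$, and the factors $(\ell+\mu_a+M)$ cancel). Your handling of the $k=1$ subcase is also fine.

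The route, however, differs from the paper's. The paper works leaf-by-leaf: it groups the leaves into $Q_a$ and its complement $Y=Q_u\setminus Q_a$, writes the contribution of each group to $D$ directly from the EBD splitting at $O$ and $u$, and subtracts. You instead work arc-by-arc via the transposed identity $D=\sum_{a'}\mu(a')f(a')$, tracking how the ``above-probability'' $f$ shifts on each arc. Both decompositions collapse to the same algebraic expression and the same decisive inequality ($D_Y\le\mu(Y)$, equivalently $\Sigma\le M^2$). The paper's computation is shorter because it needs only two summary quantities $D_a,D_Y$ rather than per-arc terms; your approach is more mechanical and makes the role of each arc explicit, which arguably generalizes more readily to other tree modifications. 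One small notational point: you write $\mu_O=\mu(Q_O)$, but $Q_O$ is only defined when $O$ is a branch node; it would be cleaner to set $\mu_O=\mu(Q)$ directly, which is what your formulas actually use.
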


\begin{proof}
Let $Y$ be the complement in $Q_u$ of the branch $Q_a$ at $u$ and let $D_a$ and $D_Y$ be the average distance from $u$ to the leaf nodes of $Q_a$ and $Y$, respectively, weighted according to the EBD distribution on $Q_a$ and $Y$. The total contribution made to $D(Q)$ by the leaf nodes of $Q_a$ and $Y$ is 
\begin{equation}
e(Q_u) \left( \left( \frac{\mu(Q_a)}{\mu(Q_a) + \mu(Y)} \right) (d(u) + D_a) + \left( \frac{\mu(Y)}{\mu(Q_a) + \mu(Y)} \right) (d(u) + D_Y) \right).
\label{eq:incline}
\end{equation}
Similarly, the total contribution made by the associated leaf nodes in $Q[a]$ is
\begin{equation}
e(Q_u) \left( \left( \frac{\mu(Q_a)}{\mu(Q_a) + \mu(Y)+d(u)} \right) D_a + \left( \frac{\mu(Y)+d(u)}{\mu(Q_a) + \mu(Y)+d(u)} \right) (d(u) + D_Y) \right).
\label{eq:pruned-incline}
\end{equation}
The difference $\Delta$ between expressions (\ref{eq:incline}) and (\ref{eq:pruned-incline}) is given by
\[
\Delta = \frac{e(Q_u) \mu(Q_a) d(u)}{(\mu(Q_a) + \mu(Y))(\mu(Q_a) + \mu(Y) + d(u))}(D_a + \mu(Q_a) + \mu(Y) - D_Y).
\]
This expression for $\Delta$ is clearly non-negative because $\mu(Y) \ge D_Y$. Since the contribution to $D$ of all other leaf nodes in $Q$ not contained in $Y$ or $Q_a$ remains unchanged after pruning, it follows that $D(Q[a]) \le D(Q)$. 
\end{proof}

We now define the tree $\tilde{Q}$, and show that $D(\tilde{Q}) \le D(Q)$.

\begin{lemma} Let $Q$ be a tree network with root $O$, let $u$ be a node in $Q$ at maximum distance $d_{\max}$ from $O$ and let $\mathcal{P}$ be the path from $O$ to $u$. Let $\tilde{Q}$ be the tree obtained from $Q$ by successively pruning branches $Q_a$ at $v$ where $a \notin \mathcal{P}$ is a branch arc incident to some branch node $v$ in $\mathcal{P}$ and $v$ is chosen each time to have minimum distance from $O$. Then
	\begin{equation}
	D(Q) \ge D(\tilde{Q}) \ge d_{\max}^2/\mu(Q).
	\label{eq:D-est}
	\end{equation}
\end{lemma}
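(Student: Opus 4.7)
The plan is to prove the two inequalities in (\ref{eq:D-est}) separately. The left-hand inequality $D(Q) \ge D(\tilde{Q})$ will follow from iterating Lemma \ref{lemma:pruning}, while the right-hand inequality $D(\tilde{Q}) \ge d_{\max}^2/\mu(Q)$ will follow from a direct EBD calculation after identifying the structure of $\tilde{Q}$.

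For the first inequality, I apply Lemma \ref{lemma:pruning} once per pruning step. The hypothesis to verify is that when $v$ is the currently chosen branch node on $\mathcal{P}$ of minimum distance to $O$, and $a \notin \mathcal{P}$ is a branch arc at $v$, no other branch node lies on the path from $O$ to $a$. This path runs along $\mathcal{P}$ up to $v$ and then along $a$, so any other branch node on it would lie on $\mathcal{P}$ strictly between $O$ and $v$, contradicting the minimality of $d(v)$. Each step strictly decreases the number of branch arcs incident to $\mathcal{P}$ that do not lie on $\mathcal{P}$, so the procedure terminates in finitely many steps and yields $D(Q) \ge D(\tilde{Q})$.

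For the second inequality, the key observation is that by construction $\tilde{Q}$ has no branch nodes on $\mathcal{P}$ other than possibly $O$ itself. Writing $b$ for the first arc of $\mathcal{P}$ (incident to $O$), the branch $Q_b$ at $O$ therefore coincides with the entire path $\mathcal{P}$, has length $d_{\max}$, and contains the single leaf $u$ at distance $d_{\max}$ from $O$. Since $\mu(\tilde{Q}) = \mu(Q)$, the EBD definition applied at $O$ gives $e(u) = e(Q_b) = \mu(Q_b)/\mu(\tilde{Q}) = d_{\max}/\mu(Q)$, so the contribution of $u$ to $D(\tilde{Q}) = \sum_{\ell \in \mathcal{L}} e(\ell) d(\ell)$ is $(d_{\max}/\mu(Q)) \cdot d_{\max} = d_{\max}^2/\mu(Q)$. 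All other contributions are non-negative, and the degenerate case in which $\tilde{Q} = Q = \mathcal{P}$ is trivial since then $u$ is the unique leaf and $e(u)=1$, yielding equality.

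The main obstacle is really just bookkeeping: confirming the minimality argument at each pruning step, and ensuring that the EBD calculation at $O$ is unaffected by the other pruned subtrees now attached there. The latter point is immediate, because the EBD definition assigns the weight $\mu(Q_a)/\mu(Q_u)$ to each branch $Q_a$ independently, so the $\mathcal{P}$-branch $Q_b$ can be analyzed in isolation from the other branches at $O$.
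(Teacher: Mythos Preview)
Your proof is correct and follows essentially the same approach as the paper's: both apply Lemma~\ref{lemma:pruning} iteratively for the first inequality and then isolate the single leaf $u$ on the $\mathcal{P}$-branch of $\tilde{Q}$ to compute its EBD weight $d_{\max}/\mu(Q)$ for the second. Your version is more careful in verifying the hypothesis of Lemma~\ref{lemma:pruning} at each step (via the minimality of $d(v)$) and in justifying termination, points the paper leaves implicit.
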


\begin{proof}
It is clear from Lemma \ref{lemma:pruning} that $D(Q) \ge D(\tilde{Q})$. To prove the second inequality, we simply note that the probability attached to $u$ in $\tilde{Q}$ under the EBD distribution is equal to $d_{\max}/\mu(Q)$, so the contribution to $D(\tilde{Q})$ made by $u$ is $(d_{\max}/\mu(Q))d_{\max} = d_{\max}^2/\mu(Q)$. 
\end{proof}

We may obtain an alternative lower bound on $V$ to the one given by Theorem \ref{thm:f&f} by considering the pure Hider strategy of chosing a node $u$ at maximum distance from $O$. This ensures a payoff of at least $d_{\max}(1+1/\rho)$, so that
\begin{equation}
V \ge d_{\max}(1+1/\rho).
\label{eq:f&f-lb}
\end{equation}

Note that (\ref{eq:f&f-lb}) holds for arbitrary networks, not only trees.

We can now state our main result for the approximation ratio of the RCPT in the find-and-fetch model.

\begin{thm}
	\label{thm:main-f&f}
	The approximation ratio of a RCPT in the find-and-fetch game on a tree is $\alpha = \alpha(\rho)$, where
	\[
	\alpha = 
	\begin{cases}
	2/(1+\rho) & \text{ if } \rho \le 1/3\\
	(1+\sqrt{1+1/\rho})/2    & \text{ if } \rho \ge 1/3.
	\end{cases}
	\]
\end{thm}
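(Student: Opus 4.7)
The plan is to combine the exact expression $R_{RCPT} = \mu + d_{\max}/\rho$ from (\ref{eq:R-RCPT}) with two lower bounds on $V$: first, $V \ge \mu + d_{\max}^2/(\rho\mu)$, obtained by chaining Theorem~\ref{thm:f&f} with the pruning estimate (\ref{eq:D-est}); second, $V \ge d_{\max}(1+1/\rho)$, from (\ref{eq:f&f-lb}). Introducing the shape parameter $x := d_{\max}/\mu \in (0,1]$ and clearing a factor of $\mu$ (and then of $\rho$) in the resulting ratio, the desired bound reduces to the one-variable optimization
\[
\frac{R_{RCPT}}{V} \;\le\; \frac{\rho + x}{\max\left\{\rho + x^2,\; x(\rho+1)\right\}}.
\]

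The pivotal algebraic identity is $(\rho + x^2) - x(\rho+1) = (1-x)(\rho-x)$, so the first lower bound dominates precisely on $x \in [0,\min(\rho,1)]$ and the second on $[\min(\rho,1),1]$. On the first region the function $f_1(x) = (\rho+x)/(\rho+x^2)$ has a unique interior critical point $x^* = \sqrt{\rho^2+\rho} - \rho$, and one checks that $x^* \le \min(\rho,1)$ iff $\rho \ge 1/3$. Substituting at $x^*$ and rationalizing the denominator $1 + \rho - \sqrt{\rho^2+\rho}$ produces the clean closed form $f_1(x^*) = (1 + \sqrt{1+1/\rho})/2$. On the second region the function $f_2(x) = (\rho+x)/(x(\rho+1))$ is strictly decreasing, so (whenever that region is non-empty, i.e.\ $\rho \le 1$) its supremum equals $f_2(\rho) = 2/(1+\rho)$.

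It then remains to merge the two cases. For $\rho < 1/3$, the critical point $x^*$ lies strictly to the right of $\rho$, so $f_1$ is still increasing throughout $[0,\rho]$ and its supremum there coincides with $f_2(\rho) = 2/(1+\rho)$, yielding $\alpha = 2/(1+\rho)$. For $\rho \ge 1/3$, I would compare $(1+\sqrt{1+1/\rho})/2$ with $2/(1+\rho)$: after clearing denominators and squaring, the required inequality reduces to the clean identity $(1+\rho)^3 - \rho(3-\rho)^2 = (1-3\rho)^2 \ge 0$, which is manifestly non-negative with equality precisely at the threshold $\rho = 1/3$. Hence $(1+\sqrt{1+1/\rho})/2$ is the controlling value in this regime.

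The main obstacle is less any individual calculation than the careful bookkeeping of the case split: tracking which of the two $V$-lower-bounds governs each subinterval of the shape parameter $x$, and verifying that the two closed-form suprema agree at the threshold $\rho = 1/3$. To argue that the stated $\alpha$ is actually attained (justifying calling it \emph{the} approximation ratio rather than merely an upper bound), the natural family is a ``broom'' tree consisting of a length-$d$ path from $O$ to the farthest leaf together with many short branches of total length $L$ attached at $O$: as the number of such branches grows, the pruning bound $D \ge d_{\max}^2/\mu$ becomes asymptotically tight, and tuning $L$ so that $d/\mu$ equals $x^*$ (when $\rho \ge 1/3$) or $\rho$ (when $\rho < 1/3$) realizes the stated ratio in the limit.
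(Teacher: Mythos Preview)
Your proposal is correct and follows essentially the same route as the paper: combine (\ref{eq:R-RCPT}) with the two lower bounds $V\ge \mu+D/\rho\ge \mu+d_{\max}^2/(\rho\mu)$ and $V\ge d_{\max}(1+1/\rho)$, substitute $z=d_{\max}/\mu$, and maximize $\min\{f_1,f_2\}$, with the threshold $\rho=1/3$ arising from whether the critical point $z_0=\sqrt{\rho^2+\rho}-\rho$ lies left of the crossover $z=\rho$. Your factorization $(\rho+x^2)-x(\rho+1)=(1-x)(\rho-x)$ makes the crossover explicit (the paper instead records the two intersection points $z=\rho$ and $z=1$ directly), and your broom-tree tightness remark is a nice addition that the paper's proof does not include.
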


\begin{proof}
By equation (\ref{eq:R-RCPT}) and Theorem \ref{thm:f&f}, we have
\begin{align}
\frac{R_{RCPT}}{V} &\le \frac{\mu+ d_{\max}/\rho}{\mu+ D/\rho} \notag \\
&\le \frac{\rho + d_{\max}/\mu}{\rho+ d_{\max}^2/\mu^2} \mbox{ (by (\ref{eq:D-est}))} \notag \\
&= \frac{\rho+ z}{\rho+ z^2},
\label{eq:bound1}
\end{align}
where $z = d_{\max}/\mu$.

Using elementary calculus, we find that the expression (\ref{eq:bound1}) is maximized for $z = z_0:= \sqrt{\rho^2+\rho} - \rho$, where it takes the value $(1+\sqrt{1+1/\rho})/2$.

Combining (\ref{eq:R-RCPT}) with the alternative lower bound (\ref{eq:f&f-lb}) yields
\begin{align}
\frac{R_{RCPT}}{V} &\le \frac{\mu+ d_{\max}/\rho}{d_{\max}(1+1/\rho)} \notag \\
&= \frac{\rho + z}{(\rho+ 1)z}.
\label{eq:bound2}
\end{align}

Expression (\ref{eq:bound2}) is a decreasing function of $z$. Writing $f_1(z)$ and $f_2(z)$ for expressions (\ref{eq:bound1}) and (\ref{eq:bound2}), respectively, we have $R_{RCPT}/V \le r := \max_{z \ge 0} \min \{f_1(z),f_2(z)\}$. Since $f_2$ is decreasing in $z$, the from $r$ takes splits into two cases which depend on the value of $\rho$: these cases are $f_2(z_0) \ge f_1(z_0)$ or $f_2(z_0) \le f_1(z_0)$, as depicted in Figure \ref{fig:tree-graphs}. 

\begin{figure}[ht]
	\begin{center}
		\includegraphics[scale=0.7]{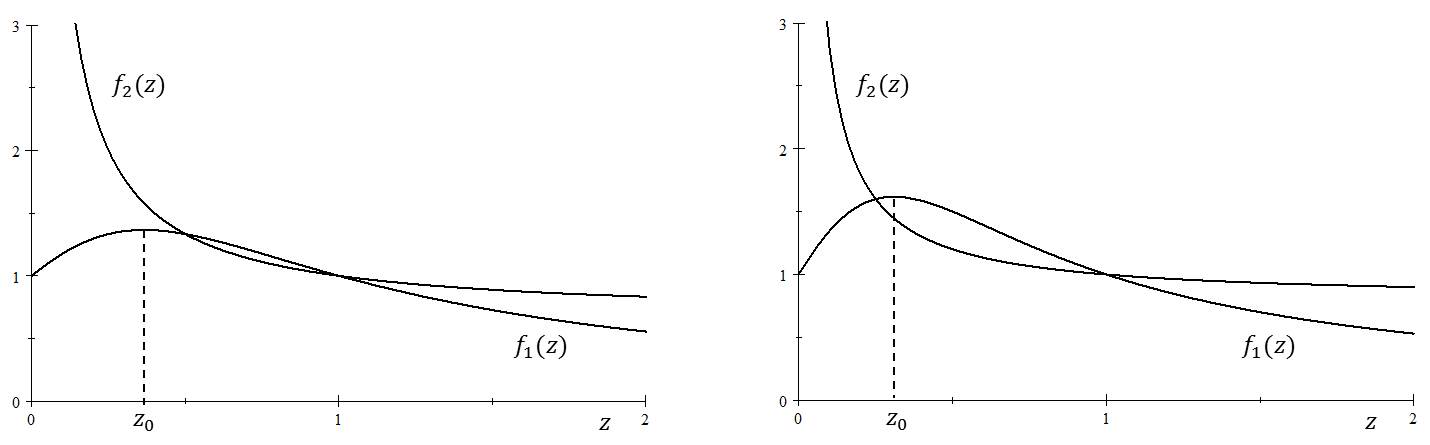}
		\caption{The graphs of $f_1(z)$ and $f_2(z)$ for $\rho=1/2$ (left) and $\rho=1/4$ (right).}
		\label{fig:tree-graphs}
	\end{center}
\end{figure}

It is easy to check that the first case occurs when $\rho \ge 1/3$, in which case $r = f(z_0)=(1+\sqrt{1+1/\rho})/2$. In the second case (when $\rho \le 1/3$), the bound $r$ is given by the value of two functions $f_1$ and $f_2$ when they intersect for the first time. The two intersections can be shown to be at $z=\rho$ and $z=1$, so at the first intersection, $z=\rho$, we have $r = f_1(\rho) = 2/(1+\rho)$. The theorem follows. 

\end{proof}

Figure \ref{fig:tree-alpha} is a plot of the approximation ratio $\alpha(\rho)$ of a RCPT for tree networks.
\begin{figure}[ht]
	\begin{center}
		\includegraphics[scale=0.7]{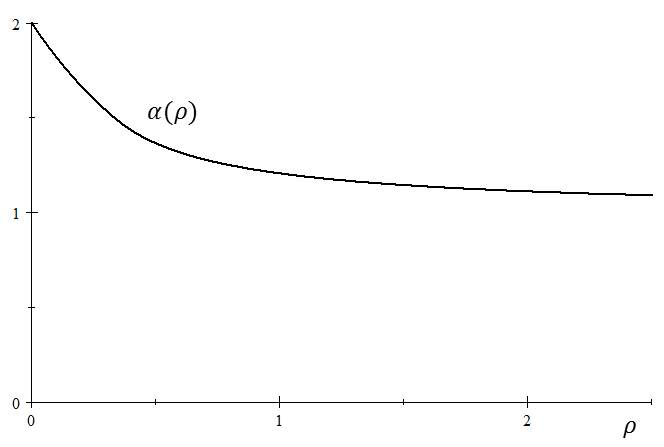}
		\caption{The graph of $\alpha(\rho)$ for tree networks.}
		\label{fig:tree-alpha}
	\end{center}
\end{figure}

Note that as $\rho \rightarrow \infty$, the approximation ratio $\alpha$ approaches $(1+\sqrt{1+0})/2=1$, and so we recover the result of \cite{Gal79} that the RCPT is optimal for trees. Also note that $\alpha$ is bounded above by $2$, with $\alpha(\rho) \rightarrow 2$ as $\rho \rightarrow 0$. When $\rho = 1$, so that the Searcher's return speed is the same as her searching speed, $\alpha = (1+\sqrt{2})/2 \approx 1.21$.

\subsection{Eulerian Networks}
We conclude our analysis of the performance of the RCPT by examining the find-and-fetch game on Eulerian networks. Nothing is known about the solution of this game on such networks, as the work of \cite{Alperna} restricts its attention to trees. 

For Eulerian networks, since the expected search time of any point under a RCPT is $\mu/2$, the maximum expected payoff $R_{RCPT}$ of a RCPT is easily seen to be
\begin{equation}
R_{RCPT} = \mu/2 + d_{\max}/\rho.
\label{eq:R-RCPT2}
\end{equation}

In order to estimate the efficiency of the RCPT we may use the lower bound (\ref{eq:f&f-lb}) given in the proof of Theorem \ref{thm:main-f&f}, but this bound is not very good if $d_{\max}$ is small: in this case the Hider benefits from randomization. So we construct another bound, based on the {\em uniform} strategy $\nu$ of the Hider: that is the mixed strategy that chooses a subset $A$ of $Q$ with probability proportional to the length $\mu(A)$ of $A$.

\begin{lemma}
	Consider the find-and-fetch game on a rooted Eulerian network, $Q$. The uniform strategy $\nu$ of the Hider guarantees the following lower bound on the value $V$.
	\begin{equation}
	V \ge \mu/2 + d_{\max}^2/(\rho \mu).
	\label{eq:f&f-lb2}
	\end{equation}
\end{lemma}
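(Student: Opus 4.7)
The plan is to evaluate, for an arbitrary mixed Searcher strategy $s$, the expected payoff $R(s,\nu)$ against the uniform Hider strategy $\nu$, and to deduce the lower bound on $V$ from $V \ge \min_s R(s,\nu)$. Writing $\bar d = (1/\mu)\int_Q d(x)\, d\mu(x)$ for the average distance of a uniform point of $Q$ from $O$, we have $R(s,\nu) = T(s,\nu) + \bar d/\rho$, since $d(H)$ depends only on the Hider's choice. The standard argument shows $T(s,\nu) \ge \mu/2$: for any pure unit-speed walk $S$ the set $S([0,t])$ has length at most $t$, so the probability that a uniform Hider is found by time $t$ is at most $t/\mu$, giving $T(S,\nu) \ge \int_0^\mu (1 - t/\mu)\, dt = \mu/2$, and the bound passes to mixed $s$ by linearity. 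It therefore suffices to prove the purely geometric inequality
\[
\int_Q d(x)\, d\mu(x) \ge d_{\max}^2.
\]

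This is the step that essentially uses the Eulerian hypothesis. Let $x^* \in Q$ realise $d(x^*) = d_{\max}$; subdividing an arc if necessary, I may take $x^*$ to be a node. Since $Q$ is Eulerian it has no bridges and is therefore $2$-arc-connected, so Menger's theorem furnishes two arc-disjoint paths $P_1$ and $P_2$ from $O$ to $x^*$. Choose $P_1$ to be shortest, so that $\mu(P_1) = d_{\max}$, and set $\ell = \mu(P_2) \ge d_{\max}$. Parameterising $P_1$ by arc length $s \in [0, d_{\max}]$ from $O$, every point $y(s)$ satisfies $d(y(s)) = s$ because $P_1$ is itself a shortest path, giving $\int_{P_1} d(y)\, d\mu(y) = d_{\max}^2/2$. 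Parameterising $P_2$ similarly by $s \in [0,\ell]$, the triangle inequality at $x^*$ yields $d(y(s)) \ge d_{\max} - (\ell - s)$; taking the positive part and integrating over the subinterval $[\ell - d_{\max}, \ell]$ on which this lower bound is nonnegative gives $\int_{P_2} d(y)\, d\mu(y) \ge d_{\max}^2/2$. Because $P_1$ and $P_2$ are arc-disjoint and $d \ge 0$ on the rest of $Q$, summing these contributions yields $\int_Q d(x)\, d\mu(x) \ge d_{\max}^2$, as required.

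The main obstacle is precisely this doubling step: using only the shortest path $P_1$ would give a lower bound of $d_{\max}^2/2$, which is a factor of two too weak, and the target inequality really can fail without 2-arc-connectedness (consider a simple path rooted at one end). The Eulerian hypothesis is what supplies the second arc-disjoint path $P_2$ from which the missing $d_{\max}^2/2$ contribution is recovered via the triangle inequality.
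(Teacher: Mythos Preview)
Your proof is correct and follows essentially the same route as the paper: split the expected payoff against $\nu$ into the search-time part (bounded below by $\mu/2$) and the return-time part $\bar d/\rho$, then use 2-arc-connectedness to find two arc-disjoint $O$--$x^*$ paths whose combined contribution to $\int_Q d(x)\,d\mu(x)$ is at least $d_{\max}^2$. Your treatment of the second path via the triangle inequality is in fact more explicit than the paper's, which simply asserts $\int_{\mathcal P_2} d(x)\,dx \ge \int_{\mathcal P_1} d(x)\,dx$ without writing out the justification.
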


\begin{proof}
First note that against  $\nu$ any CPT is a best response (that is, it is a strategy with minimal expected payoff against $\nu$). The expected search time of a CPT against $\nu$ is $\mu/2$ (this is fairly obvious, but can be found, for example, in \cite{Gal79}) so the payoff of the find-and-fetch game is given by $R(CPT,\nu) = \mu/2 + \bar{d}$, where $\bar{d}$ is the expected distance from $O$ to all points in $Q$, with respect to $\nu$. That is,
\[
\bar{d} = \int_{x \in Q} \frac{d(x)}{\rho} \cdot \frac{dx}{\mu} .
\]
Let $\mathcal{P}_1$ be a shortest path from $O$ to a point $y$ at maximum distance from $O$. Since $Q$ is Eulerian, it is 2-arc-connected, and it follows that there is another path $\mathcal{P}_2$ from $O$ to $y$ which is edge disjoint from $\mathcal{P}_1$ and has length at least $d_{\max}$ (otherwise $y$ would not be at maximum distance from $O$). It follows that
\begin{align*}
\bar{d} &\ge \int_{x \in \mathcal{P}_1} \frac{d(x)}{\rho} \cdot \frac{dx}{\mu}  + \int_{x \in \mathcal{P}_2} \frac{d(x)}{\rho} \cdot \frac{dx}{\mu}  \\
&\ge 2 \int_{x \in \mathcal{P}_1} \frac{d(x)}{\rho} \cdot \frac{dx}{\mu} \\
&= d_{\max}^2/(\rho \mu).
\end{align*}
Inequality (\ref{eq:f&f-lb2}) follows.
\end{proof}

Combining our expression (\ref{eq:R-RCPT2}) for the payoff of a RCPT with our two lower bounds, (\ref{eq:f&f-lb}) and (\ref{eq:f&f-lb2}) on the value result in the following.

\begin{thm}
	The approximation ratio of a RCPT in the find-and-fetch game on a Eulerian network is $\alpha = \alpha(\rho)$, where
	\[
	\alpha = \frac{3+\rho+\sqrt{\rho^2+1}}{2+2\rho}.
	\]
\end{thm}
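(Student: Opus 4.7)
The plan mirrors the two-bound analysis used in the proof of Theorem~\ref{thm:main-f&f}, now combining the explicit RCPT payoff formula~(\ref{eq:R-RCPT2}) for Eulerian networks with the two lower bounds~(\ref{eq:f&f-lb}) and~(\ref{eq:f&f-lb2}) on $V$.

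First I would introduce the scaled quantity $z := d_{\max}/\mu$. Since any Eulerian network is $2$-arc-connected, every point admits two arc-disjoint paths to the root whose combined length is at most $\mu$, so $z \in (0, 1/2]$. Substituting $d_{\max}=z\mu$ into (\ref{eq:R-RCPT2}) and dividing by each of the two lower bounds gives
\[
\frac{R_{RCPT}}{V} \le f_1(z) := \frac{\rho+2z}{\rho+2z^2}
\quad \text{and} \quad
\frac{R_{RCPT}}{V} \le f_2(z) := \frac{\rho+2z}{2z(\rho+1)},
\]
so the task reduces to showing $\max_{z \in (0,1/2]} \min\{f_1(z),f_2(z)\} = \alpha$.

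The next step is to understand the envelope $\min\{f_1,f_2\}$. The function $f_2$ is strictly decreasing in $z$ with $f_2(0^+) = +\infty$, while $f_1(0^+) = 1$, $f_1$ increases to a single interior maximum and then decreases to $0$ (elementary calculus on the rational function). In particular $f_2 > f_1$ near $z=0$, so the envelope first follows the ascending $f_1$ up to the smallest crossing point $z^*$, after which it follows the descending $f_2$; the maximum of the envelope is therefore attained precisely at $z^*$, where $f_1(z^*) = f_2(z^*)$.

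Finally, I would solve for $z^*$ and evaluate. Cancelling the common factor $\rho+2z>0$ in the equation $f_1(z) = f_2(z)$ yields the quadratic $2z^2 - 2(\rho+1)z + \rho = 0$, whose discriminant simplifies to $4(\rho^2+1)$; the smaller root is $z^* = (\rho+1-\sqrt{\rho^2+1})/2$. Rationalizing the denominator gives $\rho/z^* = \rho+1+\sqrt{\rho^2+1}$, so
\[
\alpha = f_2(z^*) = \frac{2 + \rho/z^*}{2(\rho+1)} = \frac{3+\rho+\sqrt{\rho^2+1}}{2(\rho+1)},
\]
as claimed. The only genuinely routine verifications are that $z^* \le 1/2$ (which reduces to the triviality $\rho \le \sqrt{\rho^2+1}$, so $z^*$ lies in the admissible range) and that $z^*$ lies strictly below the interior maximizer of $f_1$ (so that $f_1$ is still increasing at $z^*$, confirming the envelope picture). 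Neither of these presents a serious obstacle; the main content of the argument is really just setting up the right parametrization in $z$ and identifying the correct root of the crossing quadratic.
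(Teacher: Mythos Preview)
Your proposal is correct and follows essentially the same approach as the paper: both combine~(\ref{eq:R-RCPT2}) with the two lower bounds~(\ref{eq:f&f-lb}) and~(\ref{eq:f&f-lb2}), parametrize by $z=d_{\max}/\mu$, and show the envelope $\min\{f_1,f_2\}$ is maximized at the smaller root $z^*=(\rho+1-\sqrt{\rho^2+1})/2$ of the crossing equation. The only substantive difference is that the paper explicitly carries out the verification that the crossing occurs before the maximizer of $f_1$ (your deferred ``routine'' check), via the algebraic inequality $g_1(z_1)\ge g_2(z_1)$; your added observation that $z\le 1/2$ is correct but not actually needed for the argument.
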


\begin{proof}
By (\ref{eq:R-RCPT2}) and (\ref{eq:f&f-lb2}), we have
\begin{align}
\frac{R_{RCPT}}{V} & \le \frac{\mu/2+ d_{\max}/\rho}{\mu/2 + d_{\max}^2/{(\rho \mu)}} \notag \\
& = \frac{\rho/2 + z}{\rho/2 + z^2},
\label{eq:bound3}
\end{align}
where $z = d_{max}/\mu$, as before. Expression (\ref{eq:bound3}) has a unique maximum at  $z = z_1 = (\sqrt{2\rho+ \rho^2}-\rho)/2$, at which point expression (\ref{eq:bound3}) is equal to $(1+\sqrt{1+2/\rho})/2$.

Similarly, using (\ref{eq:f&f-lb}), we obtain
\begin{align}
\frac{R_{RCPT}}{V} & \le \frac{\mu/2+ d_{\max}/\rho}{\mu/2 + d_{\max}^2/{\rho \mu}} \notag\\
& = \frac{\rho/2 + z}{z(1+\rho)}.
\label{eq:bound4}
\end{align}
Write $g_1(z)$ and $g_2(z)$ for expressions (\ref{eq:bound3}) and (\ref{eq:bound4}), respectively, so that $R_{RCPT}/V \le \max_{z \ge 0} \min\{g_1(z),g_2(z)\}$. Substituting $z=z_1$ into $g_2(z)$ we find that $g_2(z_1) = (1+1/(1+\rho)+\sqrt{2\rho+\rho^2}/(1+\rho))/2$. We calculate the difference between $g_1(z)$ and $g_2(z)$ at $z=z_1$.
\[
g_1(z_1) - g_2(z_1) = \frac{ \sqrt{(1+\rho)^2(2+\rho)/\rho} - (1+\sqrt{2\rho+ \rho^2}) }{2+2\rho}.
\]
The expression on the right-hand side is non-negative if and only if the difference between the squares of the two terms in the numerator are non-negative.  That is, if
\begin{align*}
(1+\rho)^2(2+\rho)/\rho - (1+\sqrt{2\rho+ \rho^2})^2 &\ge 0 \mbox{ which holds if and only if} \\ 
\sqrt{(1+\rho)^4} - \sqrt{\rho^4 + 2\rho^3} &\ge 0,
\end{align*}
which is clearly always true, so $g_1(z_1) \ge g_2(z_1)$ for any $\rho$. Since also $g_2$ is decreasing with $g_2(z) \rightarrow \infty$ as $z \rightarrow 0$ and $g_1(0) = 1$, it follows that $g_1$ and $g_2$ intersect at some $z \le z_1$, and $\min\{g_1(z),g_2(z)\}$ is maximized at this value of $z$, as shown in Figure \ref{fig:eul-graph}.
\begin{figure}[ht]
	\begin{center}
		\includegraphics[scale=0.7]{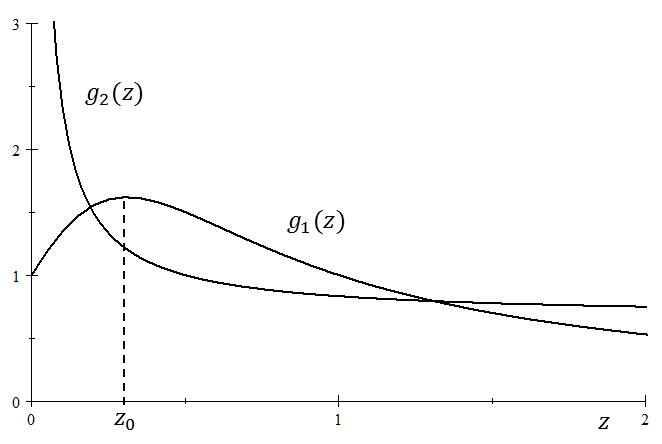}
		\caption{The graphs of $g_1(z)$ and $g_2(z)$ for $\rho=1/2$.}
		\label{fig:eul-graph}
	\end{center}
\end{figure}
It is easy to show that $g_1$ and $g_2$ intersect at two points, and the smaller of those two intersections, at $z=(1+\rho-\sqrt{1+\rho^2})/2$, maximizes $\min\{g_1(z),g_2(z)\}$. Substituting this into $g_1$ or $g_2$ gives the approximation ratio in the statement of the theorem. 
\end{proof}

As in the case of trees, we may recover the result of \cite{Gal79} for Eulerian networks, by writing the limit of the approximation ratio of a RCPT as
\[
\alpha = \frac{3+\rho+\sqrt{\rho^2+1}}{2+2\rho} = \frac{1}{2} + \frac{1}{1+\rho} + \sqrt{1/4-\frac{\rho/2}{(1+\rho)^2}} \rightarrow 1,
\]
as $\rho \rightarrow \infty$.

Figure \ref{fig:eul-alpha} is a plot of the approximation ratio $\alpha(\rho)$ of a RCPT for Eulerian networks.
\begin{figure}[ht]
	\begin{center}
		\includegraphics[scale=0.7]{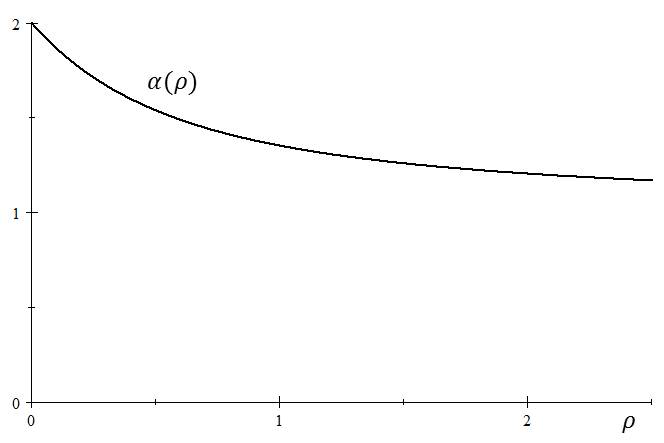}
		\caption{The graph of $\alpha(\rho)$ for Eulerian networks.}
		\label{fig:eul-alpha}
	\end{center}
\end{figure}

Note that $\alpha$ is decreasing in $\rho$, so the approximation ratio is bounded above by $\alpha(0) = 2$. When $\rho=1$, so the return speed is the same as the searching speed, $\alpha = 1+\sqrt{2}/4 \approx 1.35$.

\section{Conclusion}

We have examined the performance of a RCPT for Gal's classic search game on a network and for the find-and-fetch game. In the former game we gave a simple formula for the approximation ratio of a RCPT for any network and in the latter game we did the same for trees and Eulerian networks. Since the time to compute a RCPT is polynomial in the number of nodes of the network, this work provides efficiently computable, easily implemented search strategies for games that in general have complicated exact solutions.

The calculation of the approximation ratio of a RCPT in the find-and-fetch game required the use of a new technique called ``pruning''. In order to extend this work on the find-and-fetch game to arbitrary networks, it is likely that further new techniques will have to be developed. Future work could also be directed towards the version of Gal's game where the Searcher has an arbitrary starting point, as in \cite{DG} and \cite{Alpern-arb}. In this game, one could examine the approximation ratio of a {\em Random Chinese Postman Path}, that is Searcher strategy that chooses with equal probability a path in the network of minimum length and its reverse.

\end{document}